\documentclass[11pt]{article}

\newtheorem{theorem}{Theorem}[section]
\newtheorem{lemma}[theorem]{Lemma}
\newtheorem{proposition}[theorem]{Proposition}
\newtheorem{corollary}[theorem]{Corollary}
\newtheorem{remark}[theorem]{Remark}
\newtheorem{example}[theorem]{Example}
\newtheorem{definition}[theorem]{Definition}
\newenvironment{proof}{\par\noindent{\it Proof.}\ }{\hfill\rule{1.2ex}{1.2ex}\par}

\newcommand{\Hb}{\widehat H}
\newcommand{\Ht}{\widetilde H}
\newcommand{\bt}{\widetilde\beta}
\newcommand{\R}{{\bf R}}

\newcommand{\Ric}{\mathop{\rm Ric}}
\newcommand{\Diam}{\mathop{\rm Diam}}

\newcommand{\im}{\mathop{\rm Im}}

\newcommand{\vol}{\mathop{\rm vol}}

\title{CURVATURE, DIAMETER AND BOUNDED BETTI NUMBERS\\
{\large A REVISIT VIA THE GENERALIZED MARGULIS LEMMA}}
\author{Zhongmin Shen and Jyh-Yang Wu}
\date{}

\begin{document}
\maketitle

\begin{abstract}
We consider the image of the comparison homomorphism from bounded
cohomology to ordinary cohomology.  Its dimension is called the bounded
Betti number.  In our earlier work we proved that the total bounded Betti
number of a closed Riemannian $n$-manifold satisfying
$\Ric\geq -(n-1)$ and $\Diam\leq D$ is bounded by a constant depending
only on $n$ and $D$.  The original proof required the amenability of
the images of sufficiently small local fundamental groups, a statement
conjectured by Gromov.  A weaker Ricci curvature version was stated by
Cheeger and Colding in connection with their structure theory for
Ricci limit spaces.  Kapovitch and Wilking later gave a complete proof
of the generalized Margulis lemma under a lower Ricci curvature bound,
with the stronger conclusion that the relevant local groups contain
nilpotent subgroups of uniformly bounded index.  We show explicitly
how their theorem supplies the uniform amenability radius needed in
the original covering argument.  We also discuss the topological
meaning of the bounded part of ordinary cohomology and its relation to
the $\ell^1$-seminorm on homology.
\end{abstract}

\bigskip
\noindent
{\bf Key words:} Diameter, Ricci curvature, sectional curvature,
bounded cohomology and bounded Betti number.

\medskip
\noindent
{\bf 2000 Mathematics Subject Classification:} 53C21, 53C23.

\section{Introduction}

Let $M$ be a connected topological space.  The inclusion of the complex
of bounded singular cochains into the complex of all singular cochains
induces the comparison homomorphism
\[
        c^i:\Hb^i(M;\R)\longrightarrow H^i(M;\R).
\]
We denote its image by
\[
        \Ht^i(M):=\im c^i
\]
and put
\[
        \bt_i(M):=\dim \Ht^i(M).
\]
We called $\bt_i(M)$ the $i$th bounded Betti number.

The terminology should not obscure an important point.  The bounded
cohomology $\Hb^*(M;\R)$ is, under the usual hypotheses, determined
isometrically by the fundamental group, whereas its image
$\Ht^*(M)$ in ordinary cohomology need not be determined by the
fundamental group alone.  Thus $\Ht^i(M)$ records an interaction
between the non-amenable information carried by the fundamental group
and the ordinary $i$-dimensional topology of $M$.

Our main theorem was the following.

\begin{theorem}[Bounded Betti number theorem]\label{thm:A}
For every $n$ and $D>0$ there is a constant $C(n,D)$ such that every
closed Riemannian $n$-manifold $(M,g)$ satisfying
\[
        \Ric\geq -(n-1),\qquad \Diam(M)\leq D
\]
satisfies
\[
        \sum_{i=0}^n\bt_i(M)\leq C(n,D).
\]
\end{theorem}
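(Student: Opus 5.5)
The plan is Gromov's covering argument: find an open cover of $M$ with a uniformly bounded number of sets, each of which has amenable fundamental-group image in $\pi_1(M)$. The factorization theorem of Gromov and Ivanov then makes the comparison map pass through the cohomology of the nerve, which bounds $\bt_i(M)$.

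\emph{Step 1 (amenable covers and the nerve).} Suppose $\mathcal U=\{U_1,\dots,U_N\}$ is an open cover of $M$ by path-connected sets. Suppose further that for each $j$ the image of $\pi_1(U_j)\to\pi_1(M)$ is amenable, for every choice of basepoint. The Gromov--Ivanov theorem then says that $c^i$ factors as
\[
\Hb^i(M;\R)\longrightarrow H^i(N(\mathcal U);\R)\longrightarrow H^i(M;\R),
\]
where $N(\mathcal U)$ is the nerve of the cover. The second arrow is induced by a partition-of-unity map $M\to N(\mathcal U)$. The key point is that bounded cochains can be averaged, using amenability, so that they vanish on simplices small enough to lie in a single $U_j$. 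Consequently
\[
\bt_i(M)\le \dim H^i(N(\mathcal U);\R)\le \binom{N}{i+1}.
\]
Summing over $i=0,\dots,n$ gives $\sum_i\bt_i(M)\le 2^N$. Only the $H^i$ with $i\le n$ enter, since $\Ht^i(M)=0$ for $i>n$ anyway, so the possibly larger dimension of the nerve is harmless.

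\emph{Step 2 (uniform amenability radius).} Here I invoke the generalized Margulis lemma of Kapovitch--Wilking. It gives $\varepsilon=\varepsilon(n)>0$ and $C_0(n)$ such that, for every complete $n$-manifold with $\Ric\ge-(n-1)$ and every point $p$, the image of $\pi_1(B(p,\varepsilon))\to\pi_1(B(p,1))$ contains a nilpotent subgroup of index at most $C_0(n)$.

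The map $\pi_1(B(p,\varepsilon))\to\pi_1(M)$ factors through $\pi_1(B(p,1))$. Its image is therefore a quotient of a virtually nilpotent group, hence virtually nilpotent, hence amenable. This holds for every basepoint in the ball, since the ball is path connected. So every metric ball of radius $\varepsilon(n)$ in $M$ is an amenable set in the sense of Step 1.

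\emph{Step 3 (counting).} Choose a maximal $\varepsilon/2$-separated set $\{p_1,\dots,p_N\}$ in $M$; the balls $B(p_j,\varepsilon)$ then cover $M$. The balls $B(p_j,\varepsilon/4)$ are disjoint. By Bishop--Gromov relative volume comparison, each of them has volume at least
\[
\frac{V_{-1}^n(\varepsilon/4)}{V_{-1}^n(D)}\,\vol(M).
\]
Hence $N\le N(n,D):=V_{-1}^n(D)/V_{-1}^n(\varepsilon(n)/4)$. Combining Steps 1--3 gives $\sum_i\bt_i(M)\le 2^{N(n,D)}=:C(n,D)$.

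\emph{Main obstacle.} The real content is Step 2. The radius $\varepsilon$ must be uniform, independent of $M$ and of the point, and this is exactly what the Ricci-curvature Margulis lemma supplies. Without it one would only have amenability at a radius depending on the manifold.

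A secondary point needs care. The Gromov--Ivanov factorization requires amenability of the images in $\pi_1(M)$, not in $\pi_1(B(p,1))$. It also uses only the sets $U_j$ themselves, not their intersections, so nothing further is needed about the local groups.
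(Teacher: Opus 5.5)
Your proposal is correct and is essentially the paper's proof: you get a uniform amenability radius from the Kapovitch--Wilking lemma via the quotient observation (Corollary~\ref{cor:amenrad}), bound the number of balls by Bishop--Gromov packing, and bound the bounded Betti numbers by the Gromov--Ivanov factorization through the nerve (Lemma~\ref{lem:Ivanov}). Two small points need tidying: Kapovitch--Wilking is stated for the strict bound $\Ric>-(n-1)$ on $B_1(p)$, which the paper reaches by rescaling $g\mapsto 4g$ at the cost of halving the radius; and your count $N\le V_{-1}^n(D)/V_{-1}^n(\varepsilon/4)$ must be replaced by its maximum with $1$ when $D<\varepsilon/4$, which the paper sidesteps by taking $r\le 1$ and comparing with $V_{-1}(D+1)$.
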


This is different from the corresponding statement for ordinary Betti
numbers.  Under a lower Ricci curvature bound and an upper diameter
bound, the higher ordinary Betti numbers need not admit such a uniform
bound.  The bounded part therefore isolates a portion of ordinary
cohomology which is much more rigid under these weak geometric
hypotheses.

The local group-theoretic ingredient in the original argument was an
amenability statement conjectured by Gromov.  In the published paper
we referred to the Cheeger--Colding work for this input.  Their Ricci
limit theory provided the structural foundation and indicated the
needed conclusion, but the complete proof of the generalized Margulis
lemma, including a uniform index bound, was later given by Kapovitch
and Wilking.  We record their theorem below and show explicitly that it
implies the uniform amenability radius required in our proof.

\section{The bounded part of ordinary cohomology}

Let $C^i(M;\R)$ denote the singular cochains and let
$\widehat C^i(M;\R)$ be the subcomplex consisting of bounded cochains.
For $u\in\widehat C^i(M;\R)$ put
\[
       \|u\|_\infty=\sup_\sigma |u(\sigma)|,
\]
where the supremum is over all singular $i$-simplices.  The inclusion
$\widehat C^*(M;\R)\subset C^*(M;\R)$ induces
\[
       c^i:\Hb^i(M;\R)\longrightarrow H^i(M;\R).
\]

\begin{definition}
The bounded part of $H^i(M;\R)$ is
\[
       \Ht^i(M)=\im\{c^i:\Hb^i(M;\R)\to H^i(M;\R)\}.
\]
Its dimension
\[
       \bt_i(M)=\dim\Ht^i(M)
\]
is the bounded Betti number.
\end{definition}

For $\alpha\in\Ht^i(M)$ one may use the quotient seminorm
\[
 \|\alpha\|_\infty
   =\inf\{\|\widehat\alpha\|_b:
        \widehat\alpha\in\Hb^i(M;\R),\
        c^i(\widehat\alpha)=\alpha\}.
\]

The following observation gives a useful interpretation of
$\Ht^i(M)$.  For a singular chain
$z=\sum_j a_j\sigma_j$, put
\[
        \|z\|_1=\sum_j|a_j|,
\]
and let the same notation denote the induced $\ell^1$-seminorm on
homology.

\begin{proposition}\label{prop:duality}
If $\alpha\in\Ht^i(M)$ and $z\in H_i(M;\R)$, then
\[
       |\langle\alpha,z\rangle|
       \leq \|\alpha\|_\infty\,\|z\|_1.
\]
In particular, every bounded ordinary cohomology class vanishes on
every homology class whose $\ell^1$-seminorm is zero.
\end{proposition}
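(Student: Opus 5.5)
The plan is to argue directly at the cochain/chain level and then pass to infima on both sides. The only tool needed is the elementary pairing estimate between bounded cochains and finite chains,
\[
        |u(c)|=\Big|\sum_j a_j u(\sigma_j)\Big|\le \|u\|_\infty\,\|c\|_1,
\]
valid for any $u\in\widehat C^i(M;\R)$ and any singular chain $c=\sum_j a_j\sigma_j$.

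First, fix $\alpha\in\Ht^i(M)$, $z\in H_i(M;\R)$ and $\epsilon>0$. By the definition of the quotient seminorm, choose $\widehat\alpha\in\Hb^i(M;\R)$ with $c^i(\widehat\alpha)=\alpha$ and $\|\widehat\alpha\|_b<\|\alpha\|_\infty+\epsilon$. By the definition of the seminorm on bounded cohomology as an infimum over representatives, choose a bounded cocycle $u\in\widehat C^i(M;\R)$ representing $\widehat\alpha$ with $\|u\|_\infty<\|\widehat\alpha\|_b+\epsilon$. Similarly, choose a real singular cycle $c$ representing $z$ with $\|c\|_1<\|z\|_1+\epsilon$.

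The key point is that, since $c^i$ is induced by the inclusion $\widehat C^*\subset C^*$, the same cocycle $u$, now regarded as an ordinary cocycle, represents $\alpha$. Because the Kronecker pairing on (co)homology is well defined, independent of the chosen representatives, we get
\[
        \langle\alpha,z\rangle=u(c).
\]
Combining this with the elementary estimate above yields
\[
        |\langle\alpha,z\rangle|\le(\|\alpha\|_\infty+2\epsilon)(\|z\|_1+\epsilon).
\]
Letting $\epsilon\to0$ gives the inequality. For the ``in particular'' statement, if $\|z\|_1=0$ the right-hand side vanishes, since $\|\alpha\|_\infty$ is finite: the class $\alpha$ has at least one bounded preimage. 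Hence $\langle\alpha,z\rangle=0$.

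There is no serious obstacle here. The only point needing care is bookkeeping: the norm $\|\cdot\|_b$ on $\Hb^i$ is understood as the infimum of $\|u\|_\infty$ over bounded representatives, and the choices must be made in the order described, so that one and the same bounded cocycle witnesses both near-optimal norms simultaneously.
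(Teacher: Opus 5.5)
Your proof is correct and follows the same route as the paper, which simply says to take a bounded representative of $\alpha$ and evaluate it on singular cycles approaching $\|z\|_1$. Your version only makes explicit the $\epsilon$-bookkeeping for the two infima, over preimages in $\Hb^i(M;\R)$ and over bounded cocycle representatives, together with the finiteness of $\|\alpha\|_\infty$ that the ``in particular'' clause needs.
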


\noindent
Indeed, choose a bounded representative of $\alpha$ and evaluate it on
singular cycles approaching the $\ell^1$-seminorm of $z$.

Thus it is useful to introduce
\[
       N_i(M)=\{z\in H_i(M;\R):\|z\|_1=0\}.
\]
Every element of $\Ht^i(M)$ induces a linear functional on
\[
       H_i(M;\R)/N_i(M).
\]

The standard duality principle for the $\ell^1$-seminorm and bounded
cohomology gives a more precise statement.  The Kronecker pairing
induces an isometric isomorphism
\[
 \left(\Ht^i(M),\|\cdot\|_\infty\right)
 \simeq
 \left(H_i(M;\R)/N_i(M),\|\cdot\|_1\right)^*.
\]
Here the quotient by $N_i(M)$ changes the $\ell^1$-seminorm into a
norm.  Consequently,
\[
 \bt_i(M)
 =\dim\left(H_i(M;\R)/N_i(M)\right).
\]
Indeed, Proposition~\ref{prop:duality} gives one direction.  The
reverse direction follows from the duality principle, together with
the Hahn--Banach theorem applied to the finite dimensional quotient
$H_i(M;\R)/N_i(M)$.

This suggests the following interpretation: the bounded part is the
portion of ordinary cohomology which detects homology through bounded
cochains, or equivalently through the $\ell^1$-geometry of singular
cycles.  It is therefore not simply a measure of the size of
$\pi_1(M)$.  Rather, it measures how bounded-cohomological information
coming from the fundamental group survives in the ordinary topology
of $M$.

\begin{remark}
This viewpoint also explains why $\Ht^i(M)$ can behave very differently
from both $\Hb^i(M)$ and $H^i(M)$.  The comparison map can have a large
kernel and a small image, or the ordinary cohomology group itself can
be too small to receive the bounded classes.
\end{remark}

\section{Examples}

\begin{example}[Amenable fundamental group]
If $\pi_1(M)$ is amenable, then
\[
        \Hb^i(M;\R)=0\qquad (i\geq1)
\]
in the standard setting of bounded cohomology.  Consequently
\[
        \Ht^i(M)=0,\qquad \bt_i(M)=0\quad (i\geq1).
\]
In particular, tori and closed manifolds with virtually nilpotent
fundamental group have trivial bounded part in positive degrees.
\end{example}

\begin{example}[Closed negatively curved manifolds]
For a closed negatively curved manifold, the comparison map is
surjective in degrees $i\geq2$ by the classical bounded-cohomological
results used in the original paper.  Thus
\[
        \Ht^i(M)=H^i(M;\R),\qquad
        \bt_i(M)=\beta_i(M),\quad i\geq2.
\]
This represents the opposite extreme from the amenable case.
\end{example}

\begin{example}[Hyperbolic homology $3$-sphere]
Let $M$ be a closed hyperbolic integral homology $3$-sphere.  Its
bounded cohomology can be nontrivial in degree two, whereas
\[
        H^2(M;\R)=0.
\]
Therefore
\[
        \Ht^2(M)=0
\]
although $\Hb^2(M;\R)$ is nontrivial.  This example makes clear that
the bounded part is not determined by the size of bounded cohomology
alone.
\end{example}

\begin{example}[Products with an amenable factor]
Let $N$ be a closed manifold and consider $M=N\times S^1$.
The circle contributes an amenable factor to the fundamental group.
The mapping theorem for bounded cohomology shows that amenable normal
subgroups do not contribute new bounded cohomology.  Nevertheless,
ordinary cohomology acquires the additional classes supplied by the
$S^1$ factor.  Hence the passage from $\Hb^*(M)$ to $\Ht^*(M)$ again
depends on how the bounded classes map into the enlarged ordinary
cohomology.
\end{example}

\section{The generalized Margulis lemma under a Ricci lower bound}

The following theorem of Kapovitch and Wilking settles the generalized
Margulis lemma conjectured by Gromov and gives precisely the local
group-theoretic information needed here.

\begin{theorem}[Kapovitch--Wilking generalized Margulis lemma]
\label{thm:KW}
For each positive integer $n$, there are constants $\epsilon(n)>0$
and $C(n)>0$ with the following property.  Let $(M^n,g)$ be a complete
Riemannian manifold and suppose that
\[
        \Ric_M>-(n-1)
\]
on the metric ball $B_1(p)$.  Then the image of the natural
homomorphism
\[
 \pi_1(B_{\epsilon(n)}(p),p)
       \longrightarrow \pi_1(B_1(p),p)
\]
contains a nilpotent subgroup $N$ of index at most $C(n)$.
Moreover, $N$ admits a nilpotent basis of length at most $n$.
\end{theorem}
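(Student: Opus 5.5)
The proof will argue by contradiction, using Gromov--Hausdorff compactness and induction on the dimension of the limit space, in the spirit of Kapovitch--Wilking. Suppose the statement fails for some $n$. Then there are pointed manifolds $(M_i,p_i)$ with $\Ric>-(n-1)$ on $B_1(p_i)$, and radii $\epsilon_i\to0$, such that no subgroup of index $\leq i$ in the image
\[
G_i=\im\bigl(\pi_1(B_{\epsilon_i}(p_i),p_i)\to\pi_1(B_1(p_i),p_i)\bigr)
\]
is nilpotent with a nilpotent basis of length $\leq n$. The first reduction is to pass to the universal covers of the balls, on which $G_i$ acts by isometries. Using Gromov's short-basis argument together with Bishop--Gromov volume comparison, $G_i$ is generated by elements of displacement $\leq 2\epsilon_i$, and the number of such generators can be bounded in terms of $n$. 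Then I rescale the metrics by suitable factors $\lambda_i\to\infty$ so that the groups generated by short elements still have nontrivial but bounded displacement. After passing to a subsequence, the triples $(\widetilde{B},\tilde p_i,G_i)$ converge in the equivariant Gromov--Hausdorff sense to $(Y,y,G)$. Here $Y$ is a Ricci limit space and $G$ is a Lie group, by Cheeger--Colding and Colding--Naber.

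The key step is to find the nilpotent structure inductively. I would follow the Kapovitch--Wilking \emph{induction theorem}: if the limit of the rescaled sequence splits off an extra Euclidean factor $\R^k$, then the rescaled groups act on an almost-product structure. A normal subgroup of bounded index then acts with almost-translational part on the $\R^k$ factor and with small displacement on the complementary factor. By the induction hypothesis on the number of Euclidean factors, together with Fukaya--Yamaguchi-type arguments, this subgroup has a nilpotent basis of length at most $n-k$ on the fibre directions. Extending this basis by $k$ further elements, which correspond to almost-translations, gives a basis of length $\leq n$. Repeated blow-up at different scales increases the Euclidean rank, and the rank cannot exceed $n$. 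So after at most $n$ steps the limit is $\R^n$, where the action is almost translational and the group is virtually abelian. Unwinding the induction then yields nilpotent subgroups of uniformly bounded index with a basis of length $\leq n$, contradicting the choice of the sequence.

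I expect the main obstacle to be the rescaling step. One has to show that at some intermediate scale the limit genuinely gains a new Euclidean factor, and that the corresponding subgroups are compatible across scales. This needs the Cheeger--Colding splitting theorem, and crucially the Kapovitch--Wilking rescaling theorem. The latter says that the limit groups' identity components are reflected by a uniform sequence of small-displacement subgroups; its proof uses the ``almost homogeneity'' of Ricci limits and the Hölder continuity of tangent cones along geodesics from Colding--Naber. Since this is deep, in the present paper I would take Theorem~\ref{thm:KW} as established by Kapovitch and Wilking. My aim would be only to check that the statement quoted matches their formulation, and then to derive the consequence needed here: since nilpotent groups are amenable and amenability passes to finite-index overgroups, the image of $\pi_1(B_{\epsilon(n)}(p))$ is amenable, uniformly in $p$.
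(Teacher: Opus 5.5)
Your proposal ends where the paper does: the paper gives no proof of Theorem~\ref{thm:KW}, it quotes it from Kapovitch and Wilking, and the statement as written agrees with their generalized Margulis lemma, so deferring to their work is exactly the paper's route. One caveat about your heuristic sketch: under only a Ricci lower bound, Gromov's short-basis argument with Bishop--Gromov packing on the cover bounds the number of short generators by $C(n)$ per dyadic range of lengths, so in the collapsed case it gives no bound in terms of $n$ alone, and that bound is a consequence of the Kapovitch--Wilking theorem rather than a preliminary step toward it.
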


In particular, the image above is virtually nilpotent and hence
amenable.

\begin{remark}[Historical comment]
For a lower sectional curvature bound, related local fundamental group
results were established by Fukaya and Yamaguchi.  A weaker Ricci
curvature version was stated by Cheeger and Colding in connection with
their structure theory for Ricci limit spaces.  Kapovitch and Wilking
gave a complete generalized Margulis lemma under a lower Ricci
curvature bound and, importantly, obtained a uniform bound depending
only on the dimension for the index of the nilpotent subgroup.
\end{remark}

For the application to bounded Betti numbers we only require
amenability, so the full strength of the index bound in
Theorem~\ref{thm:KW} is not needed.  Nevertheless, the virtually
nilpotent conclusion gives a considerably sharper description of the
local fundamental groups than the amenability statement used in the
original argument.

The following corollary is the exact form needed below.  It also makes
clear why the local target $\pi_1(B_1(p),p)$ in
Theorem~\ref{thm:KW} is sufficient for a statement in $\pi_1(M,p)$.

\begin{corollary}[Uniform amenability radius]\label{cor:amenrad}
For every positive integer $n$ there is $r(n)>0$ such that, if $M^n$
is complete and
\[
             \Ric_M\geq -(n-1),
\]
then, for every $p\in M$, the image of
\[
 \pi_1(B_{r(n)}(p),p)\longrightarrow\pi_1(M,p)
\]
is virtually nilpotent, and hence amenable.
\end{corollary}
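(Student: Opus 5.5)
Set $r(n)=\epsilon(n)$, the constant of Theorem~\ref{thm:KW}, and factor the homomorphism in the statement through the unit ball. Since $B_{r(n)}(p)\subset B_1(p)\subset M$, the inclusions give
\[
 \pi_1(B_{r(n)}(p),p)\xrightarrow{\ \iota\ }\pi_1(B_1(p),p)\xrightarrow{\ j\ }\pi_1(M,p),
\]
and the map in the corollary equals $j\circ\iota$. Thus its image is $j(\im\iota)$.

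\textbf{Step 1: the local conclusion.} The hypothesis $\Ric_M\geq-(n-1)$ holds on all of $M$, but Theorem~\ref{thm:KW} is phrased with the strict inequality $\Ric_M>-(n-1)$ on $B_1(p)$. I would first remove this discrepancy. The simplest fix is to apply the theorem in dimension $n$ with a slightly smaller radius after rescaling. Replace $g$ by $\lambda^2 g$ with $\lambda>1$ fixed, say $\lambda=2$. Then the rescaled metric satisfies
\[
 \Ric\geq-(n-1)/\lambda^2>-(n-1)
\]
everywhere, and metric balls scale by $\lambda$. Applying Theorem~\ref{thm:KW} to $(M,\lambda^2g)$ shows that the image of $\pi_1(B_{\epsilon(n)/\lambda}(p))\to\pi_1(B_{1/\lambda}(p))$ contains a nilpotent subgroup of index at most $C(n)$; here the balls are taken in $g$. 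Accordingly I take $r(n)=\epsilon(n)/2$ and run the factorization above through $B_{1/2}(p)$ instead of $B_1(p)$. This handles the strict-versus-weak inequality, and I expect no other geometric input to be needed. So $\im\iota$ contains a nilpotent subgroup $N$ with $[\im\iota:N]\leq C(n)$.

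\textbf{Step 2: pushing forward to $\pi_1(M,p)$.} This is the point the paper flags: the target in Theorem~\ref{thm:KW} is local, not $\pi_1(M,p)$. The argument is purely group-theoretic.
\begin{itemize}
\item A homomorphic image of a nilpotent group is nilpotent, since $j$ maps the lower central series of $N$ onto that of $j(N)$. Hence $j(N)$ is nilpotent.
\item $j(N)$ has index at most $C(n)$ in $j(\im\iota)$, because $j$ maps the cosets of $N$ in $\im\iota$ onto the cosets of $j(N)$ in $j(\im\iota)$, so the number of cosets cannot increase.
\end{itemize}
Therefore $j(\im\iota)$ is virtually nilpotent.

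\textbf{Step 3: amenability.} A virtually nilpotent group is amenable: nilpotent groups are solvable, hence amenable, and an extension by a finite group, or a group containing an amenable subgroup of finite index, is amenable. This finishes the argument.

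The only place requiring care is Step 1, namely matching the hypothesis form and choosing the intermediate radius so that the inclusion chain is valid. Everything else is formal.
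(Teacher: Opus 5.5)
Your proposal is correct and follows essentially the same route as the paper. You rescale to $4g$ so that $\Ric\geq-(n-1)$ becomes the strict bound required by Theorem~\ref{thm:KW}, take $r(n)=\epsilon(n)/2$ with the intermediate ball $B_{1/2}(p)$, and transfer virtual nilpotence to $\pi_1(M,p)$ because the image there is a homomorphic image of the local Kapovitch--Wilking group; your Step 2 just spells out the quotient fact (with the index bound $C(n)$ preserved) that the paper uses without proof.
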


\begin{proof}
First suppose that the strict inequality
$\Ric_M>-(n-1)$ holds on $B_1(p)$.  Put $r(n)=\epsilon(n)$, where
$\epsilon(n)$ is supplied by Theorem~\ref{thm:KW}.  Let $G_p$ be the
image of
\[
 \pi_1(B_{r(n)}(p),p)\longrightarrow\pi_1(B_1(p),p).
\]
Theorem~\ref{thm:KW} says that $G_p$ is virtually nilpotent.  The image
of $\pi_1(B_{r(n)}(p),p)$ in $\pi_1(M,p)$ is the image of $G_p$ under
the homomorphism induced by $B_1(p)\longrightarrow M$.  It is therefore
a quotient of $G_p$.  A quotient of a virtually nilpotent group is
virtually nilpotent, and every virtually nilpotent group is amenable.

For the non-strict lower bound, replace $g$ temporarily by
$g_2=4g$.  Then
\[
 \Ric_{g_2}\geq-{n-1\over 4}g_2>-(n-1)g_2.
\]
Applying the strict case to $g_2$ shows that the image of the
$g$-ball of radius $\epsilon(n)/2$ in $\pi_1(M,p)$ is virtually
nilpotent.  Thus $r(n)=\epsilon(n)/2$ works uniformly.  Notice that
this rescaling does not change any fundamental group or inclusion
homomorphism.
\end{proof}

\section{Amenable coverings and bounded Betti numbers}

An open subset $U\subset M$ will be called amenable in $M$ if, for
every $x\in U$, the image of
\[
       \pi_1(U,x)\longrightarrow\pi_1(M,x)
\]
is amenable.

The key topological input is the amenable covering theorem of
Gromov--Ivanov.

\begin{lemma}[Amenable covering factorization]\label{lem:Ivanov}
Let ${\cal U}=\{U_j\}$ be an amenable open covering of $M$ and let
$N({\cal U})$ be its nerve.  Then the comparison map
\[
       \Hb^*(M;\R)\longrightarrow H^*(M;\R)
\]
factors through $H^*(|N({\cal U})|;\R)$.
Consequently,
\[
       \bt_i(M)\leq \dim H^i(|N({\cal U})|;\R).
\]
\end{lemma}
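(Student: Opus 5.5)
The plan is to prove the factorization by Gromov's and Ivanov's standard route, passing through a bounded cochain complex that computes $\Hb^*(M;\R)$ and is subordinate to the covering ${\cal U}$. Throughout, $M$ is assumed to be a manifold (or at least a CW complex), and ${\cal U}$ is assumed locally finite, or else finite, which is the case in our application since $M$ is compact.

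The first step is to choose a partition of unity $\{\phi_j\}$ subordinate to ${\cal U}$. It gives the canonical map $\Phi:M\to|N({\cal U})|$, $x\mapsto\sum_j\phi_j(x)v_j$. The goal is then to construct, for each degree $i$, a linear map $\Psi:\Hb^i(M;\R)\to H^i(|N({\cal U})|;\R)$ with
\[
        c^i=\Phi^*\circ\Psi .
\]
The inequality $\bt_i(M)\le\dim H^i(|N({\cal U})|;\R)$ follows at once from this factorization, since $\Ht^i(M)=\im c^i\subset\im\Phi^*$.

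The second step, and the heart of the argument, is Ivanov's theorem, which I expect to be the main obstacle if done from scratch. Pass to the universal cover $\widetilde M$. For each $j$ and each component of $U_j$, the relevant group is the image $\Gamma_j$ of $\pi_1(U_j)$ in $\pi_1(M)$. Since $\Gamma_j$ is amenable, one can average over $\Gamma_j$ with an invariant mean. This shows that $\Hb^*(M;\R)$ is computed isometrically by the complex of bounded $\pi_1(M)$-invariant cochains supported on \emph{${\cal U}$-small} simplices, meaning simplices whose image lies in a single lift of some $U_j$. Concretely, the restriction map from all bounded cochains to ${\cal U}$-small bounded cochains induces an isomorphism on bounded cohomology. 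Its inverse is constructed as a bounded chain homotopy using the amenability of the $\Gamma_j$; this is where the amenability hypothesis enters, through Ivanov's strong relatively injective resolutions. I would quote this from Ivanov (and Gromov's vanishing theorem) rather than reprove it.

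The third step handles cochains on small simplices. A ${\cal U}$-small bounded cocycle can be further modified, again by averaging, so that its value on a simplex $\sigma$ depends only on the set of covering elements containing the vertices of $\sigma$. This is the multicomplex/Mayer--Vietoris double complex argument: consider the double complex $\prod_{j_0<\dots<j_q}\widehat C^p(U_{j_0}\cap\dots\cap U_{j_q})$ with bounded cochains, where the amenability of the images makes the columns $p\ge1$ acyclic in the relevant (image-in-$\pi_1(M)$) sense. Because of this acyclicity, the class is represented by a \v{C}ech cocycle on the nerve, which gives $\Psi$. Compatibility with $c^i$ follows from the standard comparison between the \v{C}ech--singular double complex for ordinary cochains and the nerve map $\Phi$, which is functorial and commutes with the inclusion of bounded cochains. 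Hence $c^i=\Phi^*\circ\Psi$, completing the proof.
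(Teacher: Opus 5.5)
Your deduction of $\bt_i(M)\le\dim H^i(|N({\cal U})|;\R)$ from a factorization $c^i=\Phi^*\circ\Psi$ is fine. The paper itself gives no proof of the lemma beyond citing the Gromov--Ivanov amenable covering theorem. However, the mechanism you attribute to that theorem in your second step is false.

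Restricting bounded cochains to ${\cal U}$-small simplices, $\widehat C^*(M;\R)\to\widehat C^*_{\cal U}(M;\R)$, makes sense for every cover. It is not an isomorphism on bounded cohomology, even for amenable covers. Take a finite good cover of $T^n$, so that all nonempty intersections are contractible. Because the cover is finite, the Mayer--Vietoris rows
\[
0\to\widehat C^p_{\cal U}(T^n)\to\prod_j\widehat C^p(U_j)\to\prod_{j_0<j_1}\widehat C^p(U_{j_0}\cap U_{j_1})\to\cdots
\]
are exact, with bounded contracting homotopies. The columns are acyclic in positive degrees. Hence $\widehat H^*_{\cal U}(T^n)\cong H^*(|N({\cal U})|;\R)\cong H^*(T^n;\R)$, and in particular $\widehat H^n_{\cal U}(T^n)\neq0=\Hb^n(T^n;\R)$. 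For $n=1$ and two arcs this is a two-line Mayer--Vietoris computation.

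The standard proof that small simplices suffice iterates barycentric subdivision a number of times depending on the simplex. That is not a bounded operator, and no averaging over the $\Gamma_j$ can rescue a false statement.

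Your third step has a second, independent defect. The columns of your double complex compute $\Hb^*(U_{j_0}\cap\cdots\cap U_{j_q};\R)$. This depends on the fundamental groups of these sets themselves, not on their images in $\pi_1(M)$. For instance, a regular neighbourhood $U$ of a figure-eight in $S^3$ has $\Hb^2(U;\R)\cong\Hb^2(F_2;\R)\neq0$, although its image in $\pi_1(S^3)$ is trivial.

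So the double-complex argument only proves the lemma in the easy case where every nonempty intersection is connected with amenable fundamental group. In that case $c^*$ factors as $\Hb^*(M)\to\widehat H^*_{\cal U}(M)\cong H^*(|N({\cal U})|)\to H^*(M)$, with no appeal to step 2. That case does not cover the application. There the $U_j=B(p_j,r)$ are metric balls whose own fundamental groups and mutual intersections are uncontrolled, and only their images in $\pi_1(M)$ are known to be amenable (Corollary~\ref{cor:amenrad}).

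The missing idea is Gromov's. Keep cochains on \emph{all} simplices of the universal cover, or of Gromov's aspherical multicomplex, rather than only small ones. Then average $\pi_1(M)$-equivariantly over the amenable group of homotopies of vertices supported in the $U_j$. This makes a representing bounded cocycle invariant under moving vertices inside the $U_j$, and thereby pulled back from the nerve. Your third step gestures at this, but it cannot be carried out after restricting to ${\cal U}$-small simplices.

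Either quote the factorization theorem as a black box, as the paper does, or replace your second and third steps by this argument.
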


This lemma is the bridge between the local fundamental group and the
bounded part of global cohomology.

\section{Revised proof of the bounded Betti number theorem}

We now repeat the original covering argument, with
Corollary~\ref{cor:amenrad} replacing the conjectural input.

\par\noindent{\it Proof of Theorem~\ref{thm:A}.}\ 
Fix $n$ and $D$, and let $r=r(n)$ be the radius in
Corollary~\ref{cor:amenrad}.  Decreasing $r$ if necessary, assume that
$r\leq1$.  For every $p\in M$, the image of
$\pi_1(B_r(p),p)$ in $\pi_1(M,p)$ is virtually nilpotent and hence
amenable.

Choose a maximal collection of pairwise disjoint balls
\[
        B(p_j,r/2),\qquad j=1,\ldots,m.
\]
Then the balls $B(p_j,r)$ cover $M$.  Bishop--Gromov relative volume
comparison gives, for each $j$,
\[
 {\vol B(p_j,r/2)\over\vol M}
 \geq {V_{-1}(r/2)\over V_{-1}(D+1)},
\]
where $V_{-1}(a)$ denotes the volume of an $a$-ball in the simply
connected space of constant curvature $-1$.  Since the small balls are
pairwise disjoint, this gives a uniform bound
\[
        m\leq C_1(n,D).
\]
More precisely,
\[
 m\leq
 \frac{\displaystyle\int_0^{D+1}(\sinh t)^{\,n-1}\,dt}
      {\displaystyle\int_0^{r/2}(\sinh t)^{\,n-1}\,dt}.
\]

Each member of the covering is amenable in $M$.  Let $N$ be its nerve.
Since the covering contains at most $C_1(n,D)$ sets, the total number
of simplices of $N$ is bounded by a constant depending only on $n$ and
$D$.  Hence
\[
        \sum_i\dim H^i(|N|;\R)\leq C_2(n,D).
\]
By Lemma~\ref{lem:Ivanov}, the comparison map factors through
$H^*(|N|;\R)$, and therefore
\[
        \sum_i\bt_i(M)
        \leq\sum_i\dim H^i(|N|;\R)
        \leq C_2(n,D).
\]
This proves the theorem.
\hfill\rule{1.2ex}{1.2ex}\par

\subsection*{A geometric reformulation}

The preceding proof controls more than an abstract image of the
comparison map.  Recall that
\[
 N_i(M)=\{z\in H_i(M;\R):\|z\|_1=0\}.
\]
By the duality discussion in Section~2,
\[
 \bt_i(M)
 =\dim\left(H_i(M;\R)/N_i(M)\right).
\]
Theorem~\ref{thm:A} may therefore be stated equivalently as follows.

\begin{corollary}[Geometric reformulation]\label{cor:geometric}
For every positive integer $n$ and every $D>0$, there is a constant
$C=C(n,D)$ such that every closed Riemannian $n$-manifold satisfying
\[
       \Ric(M)\geq-(n-1),\qquad \Diam(M)\leq D,
\]
satisfies
\[
 \sum_{i=0}^n
 \dim\left(
 H_i(M;\R)/\{z\in H_i(M;\R):\|z\|_1=0\}
 \right)
 \leq C(n,D).
\]
\end{corollary}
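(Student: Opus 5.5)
The plan is to reduce Corollary~\ref{cor:geometric} to Theorem~\ref{thm:A} through the identity
\[
 \bt_i(M)=\dim\left(H_i(M;\R)/N_i(M)\right)
\]
from Section~2, applied degree by degree for $0\le i\le n$. Once this identity holds for every closed $n$-manifold, the left-hand side of the corollary is exactly $\sum_{i=0}^n\bt_i(M)$. Theorem~\ref{thm:A} bounds this sum by $C(n,D)$, and we take that same constant as the $C$ of the corollary. For a closed manifold every $H_i(M;\R)$ is finite dimensional, so all quantities involved are finite.

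The substantive step is to justify the dimension identity, and I would do it in two halves.

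\emph{Upper bound on $\bt_i$.} Proposition~\ref{prop:duality} shows that every $\alpha\in\Ht^i(M)$ vanishes on $N_i(M)$. Hence the Kronecker pairing gives a linear map
\[
\Ht^i(M)\to\left(H_i(M;\R)/N_i(M)\right)^*.
\]
This map is injective: a class $\alpha\in H^i(M;\R)$ that pairs to zero with all of $H_i(M;\R)$ is zero, by the universal coefficient theorem over $\R$. It follows that $\bt_i\le\dim(H_i/N_i)$.

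\emph{Reverse inequality.} The quotient $V=H_i(M;\R)/N_i(M)$ is finite dimensional, and $\|\cdot\|_1$ descends to a genuine norm on it. Every linear functional $\phi$ on $V$ is therefore bounded, that is, $|\phi(z)|\le K\|z\|_1$. Pull $\phi$ back to the cycle space $Z_i$ of real singular chains; the resulting functional is bounded for the seminorm $\|z\|_1$ on homology. I would then extend it in two stages:
\begin{itemize}
\item[(a)] first to the boundaries via zero, dominated by the $\ell^1$ chain norm, which uses $\|[z]\|_1\le\|z\|_1$;
\item[(b)] then, by Hahn--Banach, to all chains, keeping the $\ell^1$ domination.
\end{itemize}
The result is a bounded cochain that vanishes on boundaries, hence a bounded cocycle. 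Its ordinary class evaluates to $\phi$ and lies in $\Ht^i(M)$. Consequently the pairing map above is surjective, and the identity follows.

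The main obstacle is step (a) of the reverse inequality. There one must check that $\phi$, defined on $Z_i$, is dominated by $K$ times the chain $\ell^1$-norm. This follows because $\phi(z)$ depends only on $[z]$ and $\|[z]\|_1=\inf\|z'\|_1\le\|z\|_1$ over representatives $z'$ of $[z]$. With that in hand, Hahn--Banach applies on the normed space of finite chains, and the result is the duality principle already invoked in Section~2. So in the write-up I would simply cite that principle, and then conclude by summing over $i$ and applying Theorem~\ref{thm:A}.
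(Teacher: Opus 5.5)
Your proposal is correct and follows the same route as the paper, which likewise obtains the corollary by combining the identity $\bt_i(M)=\dim\left(H_i(M;\R)/N_i(M)\right)$ from Section~2 with Theorem~\ref{thm:A}. Your Hahn--Banach sketch of that identity is sound, but only the surjectivity half, giving $\dim(H_i(M;\R)/N_i(M))\le\bt_i(M)$, is actually needed for the bound; also, your stage (a) is vacuous, because the boundaries already lie in $Z_i$ and the pulled-back functional already vanishes on them.
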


Thus a lower Ricci curvature bound and an upper diameter bound control
the number of independent homological directions having nonzero
$\ell^1$-seminorm.  Ordinary homology may still be complicated, but
the quotient obtained by discarding all zero-$\ell^1$ directions has
uniformly bounded total dimension.  This gives a direct topological
and geometric interpretation of the bounded Betti numbers.

\begin{remark}
The passage from the local Kapovitch--Wilking group to the group used
by the amenable covering theorem is exactly the composition
\[
 \pi_1(B_r(p),p)\longrightarrow\pi_1(B_1(p),p)
 \longrightarrow\pi_1(M,p).
\]
The second image is a quotient of the first image.  This quotient
observation, rather than a subgroup argument, is what transfers virtual
nilpotence and amenability to $M$.
\end{remark}

\section{Gromov--Hausdorff collapse}

The same viewpoint clarifies the vanishing theorem from the original
paper.

\begin{theorem}\label{thm:collapse}
Let $X$ be a compact metric space and let $n$ be fixed.  There is
$\delta=\delta(n,X)>0$ such that, for a closed Riemannian
$n$-manifold satisfying
\[
        \Ric\geq-(n-1)
\]
and
\[
        d_{GH}(M,X)<\delta,
\]
one has
\[
        \bt_i(M)=0\qquad\quad\hbox{for }i>\dim X.
\]
\end{theorem}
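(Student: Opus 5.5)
Our plan is to combine Corollary~\ref{cor:amenrad} with Lemma~\ref{lem:Ivanov}, using the Gromov--Hausdorff closeness to $X$ to build an amenable open covering of $M$ whose nerve has dimension at most $\dim X$. Then $H^i(|N|;\R)=0$ for $i>\dim X$, and the factorization forces $\bt_i(M)=0$ in those degrees. We may assume $k=\dim X<\infty$, taken as covering dimension; otherwise the statement is vacuous.

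\textbf{Step 1: a covering of $X$.} Let $r=r(n)\le1$ be as in Corollary~\ref{cor:amenrad}. Since $X$ is compact of covering dimension $k$, we will choose a finite open cover $\{V_j\}$ of $X$ with the following properties:
\begin{itemize}
\item[(a)] the multiplicity is at most $k+1$, so the nerve has dimension at most $k$;
\item[(b)] $\Diam V_j<r/4$;
\item[(c)] there is a Lebesgue number $\lambda>0$ for the cover.
\end{itemize}
We pass to the shrunken sets $V_j'=\{x:d(x,X\setminus V_j)>\lambda/2\}$, which still cover $X$ because of (c). We then put $\delta=\min(\lambda/8,r/8)$; it depends only on $X$ and $n$ through $r(n)$.

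\textbf{Step 2: transfer to $M$.} Take a $2\delta$-approximation $f:M\to X$ with distortion less than $2\delta$, and set $U_j=f^{-1}(V_j')$. This set need not be open, so we instead take its open $\delta$-neighborhood $U_j^\delta$.
\begin{itemize}
\item \emph{Covering.} The sets $U_j^\delta$ cover $M$, since the $V_j'$ cover $X$.
\item \emph{Size.} Each $U_j^\delta$ has diameter less than $r/4+4\delta+2\delta<r$, so it lies in a ball $B_r(p)$ with $p\in U_j$.
\item \emph{Multiplicity.} If several $U_j^\delta$ meet at a point $y$, then $f(y)$ lies within $\lambda/4$ of every corresponding $V_j'$. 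Hence $f(y)$ lies in every corresponding $V_j$, because $V_j'$ is $\lambda/2$-inside $V_j$. So the multiplicity is at most $k+1$, and the nerve $N$ has dimension at most $k$.
\end{itemize}

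\textbf{Step 3: amenability.} For $x\in U=U_j^\delta\subset B_r(p)$, the image of $\pi_1(U,x)\to\pi_1(M,x)$ factors through the image of $\pi_1(B_r(p),x)$. After change of basepoint along a path in $B_r(p)$, the latter is conjugate to the image of $\pi_1(B_r(p),p)$. That image is amenable by Corollary~\ref{cor:amenrad}, and its subgroups are amenable, so the image of $\pi_1(U,x)$ is amenable. A subtlety is that $U$ may be disconnected; this is harmless, because the definition is pointwise, and Ivanov's theorem allows arbitrary open amenable sets. Lemma~\ref{lem:Ivanov} now gives $\bt_i(M)\le\dim H^i(|N|;\R)=0$ for $i>k$.

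\textbf{Main obstacle.} The delicate step is Step 2: controlling the multiplicity of the pulled-back covering. A GH approximation is not continuous, so the preimages are neither open nor disjoint in the expected way. This is why we shrink the cover to $V_j'$ via the Lebesgue number and thicken the preimages by only $\delta$, which is much smaller than $\lambda$. Everything else follows directly from earlier results.
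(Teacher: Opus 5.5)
Your proposal is correct and follows the same route as the paper. You pull back a fine open cover of $X$ of order at most $\dim X+1$ through a Gromov--Hausdorff approximation, obtaining a cover of $M$ by sets inside $r(n)$-balls. These sets are amenable by Corollary~\ref{cor:amenrad}, and the nerve has dimension at most $\dim X$, so Lemma~\ref{lem:Ivanov} applies; your Steps 1--3 supply the details the paper leaves as a sketch.

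There is one harmless slip in the multiplicity step: the correct estimate is $d(f(y),V_j')<3\delta\le 3\lambda/8$, not $\lambda/4$. This is still below $\lambda/2$, so the conclusion $f(y)\in V_j$ stands.
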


The proof constructs an amenable covering of $M$ whose nerve has
dimension at most $\dim X$.  The generalized Margulis lemma provides
the required amenability of sufficiently small pieces.  Lemma
\ref{lem:Ivanov} then implies that the comparison map factors through
the cohomology of this low-dimensional nerve.

This formulation suggests a useful interpretation.  During collapse,
ordinary cohomology may contain classes in dimensions above the
dimension of the limit space, but those classes cannot lie in the
bounded part once the collapse is sufficiently strong.  Thus bounded
ordinary cohomology detects a portion of topology which cannot remain
hidden entirely in the collapsing directions.

\bigskip
\noindent
Zhongmin Shen\\
Department of Mathematical Sciences\\
Indiana University Indianapolis\\
402 N. Blackford Street,\\
Indianapolis, IN 46202-3216, U.S.A.\\
\\
Email: zshen@iu.edu

\bigskip
\noindent
Jyh-Yang Wu\\
Department of Information Management\\
Yuan Ze University\\
No. 135, Yuandong Road\\
Zhongli District, Taoyuan City 32003, Taiwan\\
Email: jywu@saturn.yzu.edu.tw

\end{document}